\newtheorem{tw}{Theorem}
\theoremstyle{remark}
\newtheorem{uw}{Remark}
\theoremstyle{definition}
\newtheorem{ex}{Example}
\newtheorem{df}{Definition}
\begin{document}

\title{Reproducing Kernel Hilbert Space Associated with a Unitary Representation of a Groupoid}
\author{Monika Drewnik${}^{1}$\thanks{Corresponding author: monikadrewnik@gmail.com}}
\author{Tomasz Miller${}^{2}$}
\author{Zbigniew Pasternak-Winiarski${}^{3}$}

\affil{\small ${}^1$ College of Rehabilitation, Department of Rehabilitation, Kasprzaka 49, 01-234 Warsaw, Poland}
\affil{\small ${}^2$ Copernicus Center for Interdisciplinary Studies, Jagiellonian University,
\\Szczepa\'nska 1/5, 31-011 Krak\'ow, Poland}
\affil{\small ${}^3$ Faculty of Natural and Health Sciences, The John Paul II Catholic University of Lublin,\\ Konstantyn\'{o}w 1 H, 20-708 Lublin, Poland}

\maketitle

\begin{abstract}
The aim of the paper is to create a link between the theory of reproducing kernel Hilbert spaces (RKHS) and the notion of a unitary representation of a group or of a groupoid. More specifically, it is demonstrated on one hand, how to construct a positive definite kernel and an RKHS for a given unitary representation of a group(oid), and on the other hand how to retrieve the unitary representation of a group or a groupoid from a positive definite kernel defined on that group(oid) with the help of the Moore--Aronszajn theorem. The kernel constructed from the group(oid) representation is inspired by the kernel defined in terms of the convolution of functions on a locally compact group. Several illustrative examples of reproducing kernels related with unitary representations of groupoids are discussed in detail. The paper is concluded with the brief overview of the possible applications of the proposed constructions.
\end{abstract}

\textit{Keywords:} Reproducing Kernel Hilbert Spaces, unitary representation, groupoid, group, Haar measure, convolution

\noindent

\section{Introduction}
\label{sec:Introduction}

The theory of reproducing kernel Hilbert spaces (RKHS) provides research tools in such domains as complex analysis, probability theory and statistics \cite{ber}, stochastic (Gaussian) processes \cite{kal}, quantum physics \cite{odz,pas} or computer science (especially artificial intelligence \cite{dre,gar}). Basic properties and definitions together with the detailed analysis of RKHS can  be found in \cite{paulsen,alp}. In \cite{bar} RKHS associated with the continuous wavelet transform generated by the irreducible representations of the Euclidean Motion SE(2) are considered. 

Groupoids are widely used in differential geometry, algebraic topology and physics. There is a well-known association of groupoids with $C^\ast$-algebras \cite{dele,ren}. The definition and main properties of (locally compact) groupoids can be found for example in \cite{da,pat}. In \cite{pys1,pys2} unitary representations of groupoids are considered. The applications of theory of finite groudoids and their representations are presented in \cite{ibo1,ibo}. Considering  representations of locally compact groups, the Haar measure and the tensor product of the Hilbert spaces, we refer the Reader to \cite{waw}.

The aim of te present paper is to connect the above two domains. To this end, we study the relationship between unitary representations of groupoids and reproducing kernel Hilbert spaces, proposing how to construct one using the other provided certain conditions are met.

The content of the paper is as follows. In Section \ref{sec:Reproducing Kernel Hilbert Space} the fundamental definitions and properties of the theory of reproducing kernel Hilbert spaces are introduced. Section \ref{sec:Groupoids} covers the concept of groupoids (propositions, examples and the notion of their unitary representation). Main results of the paper are contained in Section \ref{sec:Results}. It combines the ideas presented in the two previous sections. The constructions of reproducing kernels associated to a unitary representation of a group and to a unitary representation of a groupoid are described in Subsections \ref{subsec:Results Groups} and \ref{subsec:Results Groupoids}, respectively. The latter subsection is concluded with several illustrative examples. Section \ref{sec:Applications} contains brief discussion on the possible applications of the notions studied in the article.

\section{Reproducing Kernel Hilbert Space}
\label{sec:Reproducing Kernel Hilbert Space}

\subsection{Basic concepts and properties}
\label{subsec:Podstawowe pojecia}

\begin{df}
Let $A$ be a nonempty set. The map $ K: A \times A \rightarrow \mathbb{C}$ shall be called a \textit{kernel} on $A$. We say that the kernel $K$ is \textit{positive definite} if
\begin{align*}
\forall_{ n \in \mathbb{N}} \quad \forall_{ a_1,...,a_n \in A } \quad \forall_{ \lambda_1,...,\lambda_n \in  \mathbb C} \qquad \sum^{n}_{k=1} \sum^{n}_{l=1}\lambda_k K(a_k, a_l) \bar{\lambda}_l \geq 0 .
\end{align*}
\end{df}

Let $H$ be an inner product space of complex-valued functions on $A$ equipped with the inner product\footnote{We adopt the convention that inner products are anti-linear with respect to the second argument.} $\left\langle \cdot,\cdot \right\rangle$.

\begin{df}
The family $\left\{ K_a \right\}_{a \in A} \subset H$ is called a \textit{reproducing family} of $H$ if
\begin{align}
\forall_{ a \in A} \quad \forall_{ f \in H} \qquad f(a)= \left\langle f,K_a \right\rangle.
\label{reprodukcja}
\end{align}
Equality (\ref{reprodukcja}) is called \textit{the reproducing kernel property}, and the function $K: A \times A \rightarrow \mathbb{C}$ defined as
\begin{align*}
K(b,a):= \langle K_a, K_b \rangle = K_{a}(b), \quad a,b \in A
\end{align*}
is called a \textit{reproducing kernel} on the space $H$. If the latter is a Hilbert space, we call $H$ a \emph{reproducing kernel Hilbert space} (RKHS).
\end{df}
Clearly, by the Riesz representation theorem, any RKHS has a unique reproducing family and thus a unique reproducing kernel, which can be easily shown to be positive definite. The following seminal result shows that actually the converse is also true: Every positive definite kernel is a reproducing kernel on a certain Hilbert space.

\begin{tw}[Moore--Aronszajn]
Let $K$ be a positive definite kernel on a non-empty set $A$. Then there is a unique Hilbert space $H(K)$ of complex-valued functions on $A$ with the reproducing kernel $K$.
\label{MA}
\end{tw}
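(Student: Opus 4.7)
The plan is to build $H(K)$ as the pointwise-limit realization of the completion of the linear span of the kernel sections. Concretely, for each $a \in A$ put $K_a(b) := K(b,a)$, let $H_0 := \mathrm{span}\{K_a : a \in A\}$, and endow it with the sesquilinear form
\[
\Big\langle \sum_{k} \lambda_k K_{a_k},\, \sum_{l} \mu_l K_{b_l} \Big\rangle_0 := \sum_{k,l} \lambda_k \bar{\mu}_l\, K(b_l, a_k).
\]
First I would verify that this form is well defined independently of the chosen representation of elements of $H_0$ (expanding one side using the anti-linearity in the second argument expresses the value as $\sum_k \lambda_k \overline{g(a_k)}$, manifestly independent of how $g$ is written, and symmetrically for $f$), that positive definiteness of $K$ turns it into a semi-inner product, and finally that it is a genuine inner product: the Cauchy--Schwarz inequality for semi-inner products yields the pointwise bound $|f(a)|^2 = |\langle f, K_a \rangle_0|^2 \leq \langle f, f \rangle_0\, K(a,a)$, so any $f \in H_0$ with vanishing seminorm is the zero function on $A$.

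Next, I would pass from $H_0$ to a genuine Hilbert space of functions. Abstract completion produces a Hilbert space $\widetilde H$, and to identify it with functions on $A$ I would invoke the same inequality: for any Cauchy sequence $(f_n) \subset H_0$ and any $a \in A$, the sequence $(f_n(a))$ is Cauchy in $\mathbb{C}$, hence converges to a value I call $f(a)$. Two Cauchy sequences representing the same element of $\widetilde H$ yield the same pointwise limit, again by the Cauchy--Schwarz bound, so the assignment defines an injective linear map from $\widetilde H$ into $\mathbb{C}^A$. Declaring $H(K)$ to be its image and transporting the inner product makes $H(K)$ a Hilbert space of functions on $A$, and the reproducing identity $f(a) = \langle f, K_a \rangle$ extends from $H_0$ to $H(K)$ by continuity of the evaluation functional $f \mapsto f(a)$.

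For uniqueness, I would let $H'$ be any Hilbert space of complex-valued functions on $A$ admitting $K$ as reproducing kernel. Each $K_a$ lies in $H'$, and the reproducing identity shows that any $f \in H'$ orthogonal to every $K_a$ satisfies $f(a) = \langle f, K_a \rangle_{H'} = 0$ for all $a$, hence $f = 0$; thus $\mathrm{span}\{K_a : a \in A\}$ is dense in $H'$. Because $\langle K_a, K_b \rangle_{H'} = K_a(b) = K(b,a)$ matches the inner product on $H_0$, the identity map on generators extends to an isometric isomorphism of the two completions, and since both completions are realized as functions on $A$ via the same pointwise limits, this isomorphism is literally the identity on functions, giving $H(K) = H'$ as normed function spaces.

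The main obstacle is precisely the realization step that promotes the abstract completion to an honest space of functions and guarantees strict positivity of the seminorm on $H_0$. Both rely on the single bound $|f(a)|^2 \leq \|f\|^2 K(a,a)$, which controls the pointwise behaviour of Cauchy sequences and makes the evaluation functionals continuous; everything else is a straightforward extension-by-continuity argument.
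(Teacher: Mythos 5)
Your proposal is correct and follows essentially the same route as the paper's (sketched) proof, which defers the details to Paulsen: span the kernel sections, equip with the inner product \eqref{iloczynskalarny}, use $|f(a)|^2 \leq \langle f,f\rangle_0\,K(a,a)$ to realize the completion as functions and to get the reproducing property, then prove uniqueness via density of $\mathrm{span}\{K_a\}$. The only spot worth tightening is injectivity of the map $\widetilde H \to \mathbb{C}^A$: this follows not from the Cauchy--Schwarz bound alone but from the density of $\mathrm{span}\{K_a\}$ in $\widetilde H$ (a limit element whose pairings with all $K_a$ vanish must be zero), an argument you in effect already use in the uniqueness step.
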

\begin{proof}(A sketch; for details, see \cite[Theorem 2.14]{paulsen})
One considers the vector space $H_{0}(K) := \textup{span} \{K_a\}_{a \in A}$. The map $\left\langle \cdot, \cdot\right\rangle_{H_{0}}: H_{0}(K) \times H_{0}(K) \rightarrow \mathbb{C}$ given by
\begin{align}
\label{iloczynskalarny}
\left\langle \sum^{m}_{l=1} \lambda_l K_{a_l}, \sum^{n}_{k=1} \beta_k K_{b_k} \right\rangle_{H_{0}} := \sum^{n}_{k=1} \sum^{m}_{l=1}\lambda_l \bar{\beta}_k K(b_k,a_l)
\end{align}
can be demonstrated to be a well-defined inner product. Then one shows that the completion of $H_0(K)$ in the norm induced by that inner product, denoted $H(K) := \widetilde{H_0(K)} = \widetilde{\textup{span}}\{K_a\}_{a \in A}$, can still be interpreted as a space of complex-valued functions on $A$, with $K$ as its reproducing kernel.
\end{proof}

In what follows, we shall be using the following general way of constructing positive definite kernels.
\begin{tw}
\label{tw2}
Let $F: A \rightarrow H$ be any function from a nonempty set $A$ to a Hilbert space $H$ equipped with an inner product $\langle \cdot, \cdot \rangle$. Then:
\begin{itemize}
\item The map $K: A \times A \rightarrow \mathbb{C}$ defined as $K(b,a) := \langle F(a), F(b) \rangle$ is a positive definite kernel.
\item By Theorem \ref{MA}, there exists an RKHS $H(K) := \widetilde{\textup{span}}\left\{ K(\cdot,a) \right\}_{a \in A}$, for which $K$ is the reproducing kernel.
\item The linear map $T: H \rightarrow H(K)$ defined via $(Tv)(a) := \langle v , F(a) \rangle$ is a surjective contraction. Moreover, it becomes an isometry when restricted to the closed subspace $V := \overline{\textup{span}}\, F(A)$ of the space $H$.
\item Defining $S: H(K) \rightarrow H$ as $S := T|_V^{-1}$, we obtain an isometric embedding of the RKHS $H(K)$ into $H$ that satisfies $S(K_a) = F(a)$ for every $a \in A$.
\end{itemize}
\end{tw}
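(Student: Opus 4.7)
The plan is to prove the four bullets in order; once the third is established the fourth is essentially bookkeeping. For the first bullet, I would expand the relevant quadratic form and observe that, using the anti-linearity in the second slot of the inner product on $H$,
\[
\sum_{k,l=1}^{n} \lambda_k K(a_k, a_l) \bar\lambda_l
= \sum_{k,l=1}^n \lambda_k \bar\lambda_l \langle F(a_l), F(a_k) \rangle
= \Bigl\| \sum_{l=1}^n \bar\lambda_l\, F(a_l) \Bigr\|_H^{2} \ge 0,
\]
so $K$ is positive definite, and the second bullet then follows immediately from Theorem \ref{MA}.

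For the third bullet, the cornerstone identity is
\[
(T F(b))(a) = \langle F(b), F(a) \rangle = K(a,b) = K_b(a), \qquad \text{i.e.\ } T F(b) = K_b.
\]
By linearity $T$ maps $\textup{span}\, F(A)$ onto $H_0(K) = \textup{span}\{K_a\}$, and a direct expansion reusing the identity from the first bullet yields $\|T u\|_{H(K)} = \|u\|_H$ for every $u \in \textup{span}\, F(A)$. Cauchy--Schwarz gives $|(T v)(a)| \le \|v\|_H\, \|F(a)\|_H$, ensuring $Tv$ is a well-defined function on $A$; combined with the isometry on the dense subspace, this allows me to extend $T|_{\textup{span}\, F(A)}$ by continuity to an isometry $T|_V \colon V \to H(K)$ whose range is closed and contains the dense set $H_0(K)$, so that $T|_V$ is surjective onto $H(K)$. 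Meanwhile, $T$ vanishes on $V^\perp$ because $(Tv)(a) = \langle v, F(a) \rangle = 0$ whenever $v \perp F(A)$, so decomposing $v = v_V + v_{V^\perp}$ yields $\|Tv\|_{H(K)} = \|v_V\|_H \le \|v\|_H$.

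The fourth bullet is then immediate: $S := (T|_V)^{-1}$ is the inverse of a Hilbert-space isometry, hence an isometry itself, and $S(K_a) = F(a)$ restates $T(F(a)) = K_a$ with $F(a) \in V$. The main obstacle I anticipate is the careful bookkeeping of the anti-linear convention in the verification that $\|T u\|_{H(K)} = \|u\|_H$ on $\textup{span}\, F(A)$, since the definition of the inner product on $H_0(K)$ in (\ref{iloczynskalarny}) involves a swap of indices that must be reconciled with the convention on $H$; everything else is a routine continuity/closure argument.
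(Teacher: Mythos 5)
Your proposal is correct, and it is essentially the standard argument: the paper itself gives no proof of Theorem \ref{tw2}, merely citing \cite[p.~13]{alp}, and your chain of reasoning (the identity $TF(b)=K_b$, the isometry on $\textup{span}\,F(A)$ via the inner product (\ref{iloczynskalarny}), the vanishing of $T$ on $V^\perp$, and the closed-range-plus-density argument for surjectivity) is precisely the content of that reference. The one point worth stating explicitly is that the continuous extension of $T|_{\textup{span}\,F(A)}$ to $V$ coincides with the pointwise-defined $T$, which follows because norm convergence in $H(K)$ implies pointwise convergence; you have the needed Cauchy--Schwarz bound in hand, so this is only a matter of saying it.
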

\begin{proof}
Cf. \cite[p. 13]{alp}.
\end{proof}
Notice that the third bullet of the above theorem carries a lot of information about the functions belonging to $H(K)$. For example, if $F$ is weakly continuous, then $H(K) \subset C(A)$.

The (unique) reproducing kernel of a given RKHS turns out be tightly related to the so-called Parseval frames, which can be thought of as generalizations of complete orthonormal systems of vectors.
\begin{df}[cf. Definition 2.6 \& Proposition 2.8 in \cite{paulsen}]
Let $H$ be a Hilbert space (not necessarily of functions) with an inner product $\langle \cdot, \cdot \rangle$. The set $\{w_j\}_{j \in I} \subset H$ is called a \emph{Parseval frame} for $H$ if $\|v\|^2 = \sum_{j \in I} | \langle v, w_j \rangle |^2$ for every $v \in H$ or, equivalently, if $v = \sum_{j \in I} \langle v, w_j \rangle w_j$ for every $v \in H$.
\end{df}

\begin{tw}[Papadakis]
\label{tw3}
Let $H$ be an RKHS of functions on $A$ with reproducing kernel $K$. Then the family $\left\{ \varphi_j \right\}_{j \in I} \subset H$ is a Parseval frame for $H$ iff
\begin{align}
K(b,a)= \sum_{j \in I} \varphi_j(b) \overline{\varphi_j(a)}, \quad a,b \in A,
\label{uoz}
\end{align}
where the series converges pointwise.
\end{tw}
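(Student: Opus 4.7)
The plan is to handle the two directions separately, in both cases exploiting two key principles: the reproducing identity $f(a) = \langle f, K_a\rangle$ (so that $\langle K_a, \varphi_j\rangle = \overline{\varphi_j(a)}$), and the fact that in an RKHS norm convergence implies pointwise convergence, since $|f(b)| = |\langle f, K_b\rangle| \le \|K_b\|\cdot\|f\|$.

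For the forward direction, I would assume that $\{\varphi_j\}_{j\in I}$ is a Parseval frame and apply the expansion formula $v = \sum_{j} \langle v,\varphi_j\rangle\varphi_j$ to the special vector $v = K_a$. Using the reproducing property together with the convention that the inner product is anti-linear in the second slot, I would identify $\langle K_a,\varphi_j\rangle = \overline{\langle \varphi_j, K_a\rangle} = \overline{\varphi_j(a)}$, so that $K_a = \sum_j \overline{\varphi_j(a)}\,\varphi_j$ in the norm topology of $H$. Evaluating both sides at $b$ and invoking the continuity of point evaluation yields the pointwise identity $K(b,a) = K_a(b) = \sum_j \varphi_j(b)\overline{\varphi_j(a)}$.

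For the converse, the first step is to verify the Parseval identity $\|v\|^2 = \sum_j |\langle v,\varphi_j\rangle|^2$ on the reproducing family itself: setting $v = K_a$, both sides equal $K(a,a) = \sum_j |\varphi_j(a)|^2$ by hypothesis and by the reproducing property. I would then extend the identity to the dense subspace $H_0(K) = \mathrm{span}\{K_a\}_{a\in A}$ using sesquilinearity: for $v = \sum_i c_i K_{a_i}$, a direct expansion gives both $\|v\|^2 = \sum_{i,i'} c_i\overline{c_{i'}} K(a_{i'},a_i)$ and $\sum_j |\langle v,\varphi_j\rangle|^2 = \sum_j \bigl|\sum_i c_i\overline{\varphi_j(a_i)}\bigr|^2$, and substituting the assumed expansion of $K$ into the first expression and interchanging sums shows the two agree.

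The main obstacle is the final extension from $H_0(K)$ to all of $H$. I would proceed in two steps. Truncating the sum over $j$ to any finite subset and passing to the limit along a sequence $v_n\to v$ with $v_n\in H_0(K)$ gives the Bessel-type inequality $\sum_j |\langle v,\varphi_j\rangle|^2 \le \|v\|^2$ for every $v\in H$. Armed with this bound, the difference $\bigl|\sum_j|\langle v_n,\varphi_j\rangle|^2 - \sum_j|\langle v,\varphi_j\rangle|^2\bigr|$ can be controlled by applying the Cauchy--Schwarz inequality in $\ell^2(I)$ to $|\langle v_n\!-\!v,\varphi_j\rangle|$ against $|\langle v_n,\varphi_j\rangle| + |\langle v,\varphi_j\rangle|$; the Bessel inequality bounds both $\ell^2$-norms, and letting $n\to\infty$ delivers the Parseval identity for arbitrary $v\in H$.
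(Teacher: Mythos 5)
Your proposal is correct and complete. Note that the paper itself offers no proof of this theorem --- it simply cites Paulsen--Raghupathi, Theorem 2.10 --- so there is no in-paper argument to compare against; your write-up is essentially the standard textbook proof supplied in full. Both directions are sound: the forward implication correctly applies the frame expansion to $v=K_a$, uses $\langle K_a,\varphi_j\rangle=\overline{\varphi_j(a)}$ (consistent with the paper's anti-linearity convention in the second slot), and passes from norm to pointwise convergence via the boundedness of evaluation functionals. The converse is the only place where care is needed, and you handle it properly: verifying the Parseval identity on $H_0(K)=\mathrm{span}\{K_a\}_{a\in A}$ by sesquilinear expansion (the interchange of the finite sum over $i,i'$ with the sum over $j$ is legitimate because each series $\sum_j\varphi_j(a_{i'})\overline{\varphi_j(a_i)}$ converges by hypothesis), then establishing the Bessel inequality on all of $H$ by truncation and limits, and finally using it together with Cauchy--Schwarz in $\ell^2(I)$ to pass to the closure. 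The cited source organizes the converse slightly differently, via the analysis operator $V\colon H\to\ell^2(I)$, $(Vh)_j=\langle h,\varphi_j\rangle$, and the observation that $V$ is an isometry iff $\{\varphi_j\}$ is a Parseval frame; your density argument is the same idea unpacked by hand, at the cost of a little more epsilon-management but with no reliance on outside machinery.
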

\begin{proof}
See \cite[Theorem 2.10]{paulsen}.
\end{proof}

\subsection{Reproducing kernel defined by the convolution of functions on a locally compact group}
\label{subsubsec:Splot funkcji na grupie lokalnie zwartej}

Let $G$ be a locally compact group (not necessarily abelian) and let $\mu$ be a fixed left Haar measure on $G$. For two continuous compactly supported complex-valued functions $f_{1}, f_{2} \in C_c(G)$, their convolution $f_{1} \ast f_{2}$ is another such function on $G$ defined via
\begin{align*}
(f_{1} \ast f_{2})(g) = \int_{G} f_{1}(\tau ) f_{2}(\tau^{-1} g) d \mu (\tau).
\end{align*}

The above definition extends to the space $L^2(G, \mu)$ in the following sense \cite[444O \& 444R]{fremlin}: for any $f_1,f_2 \in L^2(G, \mu)$ the convolution $f_1 \ast \tilde{f}_2$, where $\tilde{f}_2(g) := \overline{f_2(g^{-1})}$, is a well-defined element of $C_b(G)$ and, moreover, $|(f_1 \ast \tilde{f}_2)(g)| \leq \|f_1\|_{L^2} \|f_2\|_{L^2}$ for every $g \in G$.

\begin{ex}
\label{Ex1}
Let $G$ be a locally compact group and $\mu$ be a left Haar measure on $G$. Fix $f \in L^2(G, \mu)$ and consider the map $K: G \times G \rightarrow \mathbb{C}$ defined via
\begin{align*}
K(h, g):= (f \ast \tilde{f})(g^{-1} h), \quad g, h \in G.
\end{align*}
By the preceding discussion, $K$ is bounded and jointly continuous. Moreover, it is a positive definite kernel, because for any $n \in \mathbb{N}$, $\lambda_1,\ldots,\lambda_n \in \mathbb{C}$ and $g_1,\ldots,g_n \in G$ one has that
\begin{align*}
\sum_{i,j=1}^{n} \lambda_i K(g_i,g_j) \overline{\lambda}_j & = \sum_{i,j=1}^{n} \lambda_i \overline{\lambda}_j \int_{G} f(\tau) \tilde{f}( \tau^{-1} g_j^{-1} g_i)  d\mu (\tau) =\sum_{i,j=1}^{n} \lambda_i \overline{\lambda}_j \int_{G} f(\tau) \overline{f ( g_i^{-1} g_j \tau )}  d\mu (\tau)
\\
&=\sum_{i,j=1}^{n} \lambda_i \overline{\lambda}_j \int_{G} f(g_j^{-1} \tau) \overline{f ( g_i^{-1} \tau )}  d\mu (\tau) =\int_{G} \left| \sum_{j=1}^{n} \overline{\lambda}_j f (g_j^{-1} \tau) \right|^2 d\mu (\tau) \geq 0,
\end{align*}
where in the antepenultimate step we employed the left-invariance of $\mu$.

By Theorem \ref{MA}, $K$ is a reproducing kernel on a certain Hilbert space $H(K)$ equipped with the inner product $\left\langle \cdot,\cdot \right\rangle_{H(K)}$ with the reproducing family $\{K_g := K(\cdot, g)\}_{g \in G}$. In fact, $H(K)$ is a certain space of bounded continuous functions on $G$, which can be isometrically embedded into $L^2(G, \mu)$. To see why this is the case, let $L_g: G \rightarrow G$, $\xi \mapsto g\xi$ denote the \emph{left shift} by the element $g \in G$. Employing the pullback $L_g^\ast f := f \circ L_g$, one can write that
\begin{align}
\label{jadrosplotowe}
K(h,g) = \int_{G} f(\tau) \tilde{f}( \tau^{-1} g^{-1} h ) d\mu(\tau) = \int_{G} f(g^{-1}\tau) \overline{f (h^{-1}\tau) } d\mu(\tau) = \left\langle L_{g^{-1}}^\ast f, L_{h^{-1}}^\ast f \right\rangle_{L^2}
\end{align}
for any $g,h \in G$. Thus, the above construction is actually a special case of the one described in Theorem 2, with the map $F: G \rightarrow L^2(G,\mu)$ defined as $F(g) := L_{g^{-1}}^\ast f$ for every $g \in G$. Hence the elements of $H(K)$ are mappings of the form $g \mapsto \langle v, L_{g^{-1}}^\ast f \rangle$ with $v \in L^2(G,\mu)$, which are clearly bounded (by $\|v\|_{L^2}\|f\|_{L^2}$) and continuous, whereas the isometric embedding $S: H(K) \rightarrow L^2(G,\mu)$ satisfies $S(K_g) = L_{g^{-1}}^\ast f$.

As a side remark, observe that the above positive definite kernel $K$ can also be expressed as
\begin{align*}
K(h,g) &= \int_{G} f(g^{-1}\tau) \overline{f (h^{-1}\tau) } d\mu(\tau)= \int_{G} \tilde{f}(\tau^{-1}h) \overline{\tilde{f} (\tau^{-1}g) } d\mu(\tau)
\\
&= \int_{G} L_{\tau^{-1}}^\ast \tilde{f}(h) \overline{L_{\tau^{-1}}^\ast \tilde{f}(g)} d\mu(\tau), \quad g, h \in G,
\end{align*}
what is analogous to formula (\ref{uoz}), only here instead of a Parseval frame $\left\{ \varphi_j \right\}_{j \in I}$ and a counting measure we have the family $\{L_{\tau^{-1}}^\ast \tilde{f}\}_{\tau \in G}$ of the left-translations of the map $\tilde{f}$ and the left Haar measure $\mu$.
\end{ex}

\section{Groupoids}
\label{sec:Groupoids}

Groupoids provide both a generalization of groups \cite{bro} and of equivalence relations \cite{pat}.

\begin{df}
Let $\Gamma,X$ be nonempty sets. A \textit{groupoid} $\Gamma$ over a set $X$ is a septuple $(\Gamma,X, s,r, \varepsilon, \cdot, {}^{-1})$ with the below described mappings:
\begin{enumerate}
	\item the \emph{source} mapping $s : \Gamma \longrightarrow X$, which is surjective.
	\item the \emph{range} mapping $r : \Gamma \longrightarrow X$, which is surjective.
	\item the \emph{multiplication} mapping $\cdot : \Gamma^{(2)} \longrightarrow \Gamma$, where $\Gamma^{(2)}:=\left\{(\gamma_{1}, \gamma_{2}) \in \Gamma \times \Gamma \ | \ r(\gamma_{2})=s(\gamma_{1})\right\}$.
The multiplication is associative. For simplicity, instead of $\gamma_{1} \cdot \gamma_{2}$ we write $\gamma_{1} \gamma_{2}.$
\item the \emph{embedding} map $\varepsilon : X \longrightarrow \Gamma$ fulfilling
	$$\varepsilon(r(\gamma))\gamma= \gamma=\gamma \varepsilon(s(\gamma)).$$
\item the \emph{inversion} map $^{-1} : \Gamma \longrightarrow \Gamma$ such that
\begin{align*}
\forall_{\gamma \in \Gamma} \quad \gamma^{-1} \gamma=\varepsilon(s(\gamma)) \quad \textrm{and} \quad \gamma \gamma^{-1} =\varepsilon(r(\gamma)).\end{align*}

\end{enumerate}
\end{df}

The element $\gamma \in \Gamma$ can be regarded as an arrow with the starting point at $x=s(\gamma)$ and the ending point at $y=r(\gamma)$, where $x,y\in X$.
\begin{displaymath}
\xymatrix{
\bullet_{y}  \ar@/^1pc/@{<-}[rr]^\gamma &&
\bullet_{x}
}
\end{displaymath}

\begin{uw}
From the above definition it follows that the groupoid $(\Gamma,X, s,r, \varepsilon, \cdot, {}^{-1})$ can be identified with a small category in which all morphisms are invertible. In this interpretation $X$ is the set of objects, $\Gamma$ is the set of morphisms, $s(\gamma)$ is the domain of $\gamma$, $r(\gamma)$ is the codomain of $\gamma$, $\varepsilon(x)$ is the identity morphism at $x$, $\cdot$ is the composition of morphisms and $\gamma^{-1}$ is the inverse of the morphism $\gamma \in \Gamma.$
\end{uw}

\begin{ex}
A group $G$ is a groupoid over the singleton $X:=\left\{x\right\}$ with the mappings defined as $\forall_{g \in G} \quad s(g):=x, r(g):=x$ and $\varepsilon(x) := e$ --- the unit of the group $G$. The inversion and multiplication mappings are given by the respective group operations.
\end{ex}

Let $\Gamma$ be a groupoid over the set $X$. It is easy to show that the relation $\sim$ on the set $X$ defined as follows:
$$\forall_{x,y \in X} \quad x \sim y \quad \Leftrightarrow \quad \exists_{\gamma \in \Gamma} \quad x=s(\gamma) \ \ \wedge \ \ y=r(\gamma)$$
is an equivalence relation on $X$. The equivalence class of any element $x$ with respect to $\sim$ is the set $$\textrm{Or}_{x} := \left[x\right]_{\sim} = \left\{y \in X \ | \ \exists_{\gamma \in \Gamma} \ \ s(\gamma)=x \ \wedge \ r(\gamma)=y \right\}= r(\Gamma_{x}),$$ where $\Gamma_{x} := \left\{\gamma \in \Gamma \ | \ s(\gamma)=x\right\}=s^{-1}(x).$

We call $\textrm{Or}_{x}$ \textit{the orbit} of the element $x \in X$ with respect to the groupoid $\Gamma.$ We say that $\Gamma$ is transitive if for any $x,y \in X$ we have $x \sim y$ (or, equivalently, if $\textrm{Or}_{x}=X$ for some and hence for every $x \in X$).

Defining \textit{the fibers} of the groupoid $\Gamma$ as $\Gamma_{x} := s^{-1}(x)$ and $\Gamma^{x}:=r^{-1}(x)$ and using the above definition of the equivalence relation $\sim$, we have $r(s^{-1}(x))=  s(r^{-1}(x))$.

The set of arrows starting at $x$ and ending at $y$ is denoted by $\Gamma_{x}^{y}:=\Gamma_{x} \cap \Gamma^{y}.$
The set $\Gamma_{x}^{x}:=\Gamma_{x} \cap \Gamma^{x}=\left\{\gamma \in \Gamma \ | \ r(\gamma)= s(\gamma)=x \right\}$ of elements starting and ending at $x$ together with the groupoid multiplication and inversion map has a group structure, and we call it the \textit{isotropy group} of the element $x$.

\begin{uw}
If $\Gamma$ is a transitive groupoid over $X$ then it is easy to show that for any $y \in X$ the groups $\Gamma_{x}^{x}$ and $\Gamma_{y}^{y}$ are isomorphic (albeit not canonically!). Without the transitivity, however, it may happen that for some $x \neq y$ the fibers $\Gamma_{x}^{x}$ and $\Gamma_{y}^{y}$ are not isomorphic as groups.
\end{uw}

\begin{uw}
If $(\Gamma,X, s,r, \varepsilon, \cdot, {}^{-1})$  is a groupoid and $\Gamma, X$ are topological spaces then we call $\Gamma$ \textit{a topological groupoid} if all maps $s,r, \varepsilon, \cdot, ^{-1}$ are continuous. In this case the maps $\varepsilon, ^{-1}$ are homeomorphisms onto their images and all isotropy groups are topological groups.
\end{uw}

\subsection{Unitary representation of a groupoid}
\label{subsec:Reprezentacja unitarna grupoidu}

Recall that a \emph{unitary representation} of a group $G$ is a pair $(\mathcal{U},H)$, where $H$ is a Hilbert space and $\mathcal{U}$ is a map assigning to every $g \in G$ a unitary operator $\mathcal{U}(g): H \rightarrow H$ satisfying $\mathcal{U}(g_1g_2) = \mathcal{U}(g_1)\mathcal{U}(g_2)$ for all $g_1,g_2 \in G$. A standard example of a unitary representation of a (locally compact) group is the left regular representation $\mathcal{U}(g) := L^{\ast}_{g^{-1}}: L^2(G,\mu) \rightarrow L^2(G,\mu)$, which appeared in Example \ref{Ex1} above.

The notion of a unitary representation can be extended onto groupoids.
\begin{df}
\label{DefRepGrupoidu}
Let $\Gamma$ be a groupoid over the set $X$. A pair $(\mathcal{U},\mathbb{H})$ is called a \emph{unitary representation} of the groupoid $\Gamma$ if $\mathbb{H} := \{H_x\}_{x \in X}$ is a family of Hilbert spaces and $\mathcal{U}$ is a mapping assigning to each $\gamma \in \Gamma$ a unitary transformation $\mathcal{U}(\gamma): H_{s(\gamma)} \rightarrow H_{r(\gamma)}$ in such a way that $\mathcal{U}(\gamma_1 \gamma_2) = \mathcal{U}(\gamma_1)\mathcal{U}(\gamma_2)$ for every $(\gamma_1, \gamma_2) \in \Gamma^{(2)}$.
\end{df}
Notice that from the above definition it already follows that
\begin{itemize}
\item $\mathcal{U}(\varepsilon(x)) = \textup{id}_{H_x}$ for every $x \in X$.
\item $\mathcal{U}(\gamma^{-1}) = \mathcal{U}(\gamma)^{-1} = \mathcal{U}(\gamma)^\dag$ for every $\gamma \in \Gamma$.
\end{itemize}

From the family $\mathbb{H} := \{H_x\}_{x \in X}$ one can construct a new Hilbert space $\mathcal{H} := \widetilde{\bigoplus}_{x \in X} H_x$ defined as the completion of the direct sum $\bigoplus_{x \in X} H_x$ with respect to the norm generated by the inner product $\sum_{x \in X} \langle \cdot, \cdot \rangle_{x}$, where $\langle \cdot, \cdot \rangle_{x}$ denotes the inner product on $H_x$.

Many authors \cite{dix,pat,pys2} use a more general and somewhat more involved definition of a unitary representation of a groupoid, in which $\mathbb{H}$ is a \emph{Hilbert bundle} or a \emph{measurable field of complex Hilbert spaces}. This allows for constructing many more interesting Hilbert spaces from the fibers $H_x$ than just the completed direct sum $\mathcal{H}$ above. In the present paper, however, we keep the definition simple so that the constructions of the positive definite kernels and RKHS become more tractable.

\begin{ex}
Let $\Gamma$ be a groupoid over $X$ and $H$ be a fixed Hilbert space equipped with an inner product $\langle \cdot,\cdot \rangle$. Consider the family $\{H_x := \{x\} \times H\}_{x \in X}$ with $H_x$ endowed with the inner product $\langle (x,h_1), (x,h_2) \rangle_x := \langle h_1,h_2 \rangle$. For any $\gamma \in \Gamma_{x}^{y}$, $x,y \in X$ the transformation $\mathcal{U}(\gamma): H_{x} \rightarrow H_{y}$ defined by $\mathcal{U} (\gamma)(x,h):=(y,h)$ (change of the fiber base point) is of course unitary. Any such a representation of the groupoid $\Gamma$ is called a \textit{trivial representation}.
\end{ex}

A less trivial standard examples of groupoid representations arise for locally compact topological groupoids. Observe that for any such a groupoid $\Gamma$, the fibers $\Gamma^{x}:=r^{-1}(x)$ and $\Gamma_{x}:=s^{-1}(x)$ for any $x \in X$ are locally compact spaces, too, and the same concerns the base space $X$ itself. On such groupoids one introduces the following generalization of Haar measures \cite{pat,pys2}.

\begin{df}
Let $\Gamma$ be a locally compact topological groupoid. The family $\left\{ \lambda^{x} \right\}_{x \in X}$ of regular Borel measures on $\Gamma^x$ is called a \textit{left Haar system} for the groupoid $\Gamma$ if
\begin{enumerate}[1.]
	\item For every $f \in C_{c}(\Gamma)$ the function $f^0: X \rightarrow \mathbb{C}$ given by $f^0(x):=\int_{\Gamma^{x}} f d \lambda^{x}$ is continuous,
	\item For every $\gamma \in \Gamma$ and every $f \in C_{c}(\Gamma)$
	\begin{align*}
	\int_{\Gamma^{s(\gamma)}} f (\gamma \chi) d \lambda^{s(\gamma)}(\chi)= \int_{\Gamma^{r(\gamma)}} f (\chi) d \lambda^{r(\gamma)}(\chi).
	\end{align*}
\end{enumerate}
Similarly, the family $\left\{ \lambda_{x} \right\}_{x \in X}$ of regular Borel measures on $\Gamma_x$ is called a \textit{right Haar system} for the groupoid $\Gamma$ if
\begin{enumerate}[1.{$^\prime$}]
	\item For every $f \in C_{c}(\Gamma)$ the function $f_0: X \rightarrow \mathbb{C}$ given by $f_0(x):=\int_{\Gamma_{x}} f d \lambda_{x}$ is continuous,
	\item For every $\gamma \in \Gamma$ and every $f \in C_{c}(\Gamma)$
	\begin{align*}
	\int_{\Gamma_{r(\gamma)}} f ( \chi \gamma) d \lambda_{r(\gamma)}(\chi)= \int_{\Gamma_{s(\gamma)}} f (\chi) d \lambda_{s(\gamma)}(\chi).
	\end{align*}
\end{enumerate}
Observe that both $f^0$ and $f_0$ are automatically compactly supported. Conditions 1. and 1.$^\prime$ express the demand for the measures $\lambda^x$ and $\lambda_x$ to vary continuously over $X$, whereas conditions 2. and 2.$^\prime$ generalize the notions of, respectively, left and right invariance of Haar measures on locally compact groups.
\end{df}

\begin{ex}
For any $x \in X$ let $H_{x} = L^{2}(\Gamma^{x}, \lambda^{x})$, where $\left\{ \lambda^{x} \right\}_{x \in X} $ is a left Haar system on $\Gamma$. For any $\gamma \in \Gamma_{x}^{y}$ and for $f \in H_{x}$ define a unitary transformation $\mathcal{U} (\gamma): H_{x} \rightarrow H_{y}$ by
\begin{align*}
( \mathcal{U} (\gamma) f ) (\chi) = f( \gamma^{-1} \chi ), \quad \chi \in \Gamma^{y}.
\end{align*}
Such a representation $(\mathcal{U}, \{H_x\}_{x \in X})$ is called \textit{the left regular representation} of the groupoid $\Gamma$.
\end{ex}

\begin{ex}
For any $x \in X$ let $H_{x} = L^{2}(\Gamma_{x}, \lambda_{x})$, where $\left\{ \lambda_{x} \right\}_{x \in X} $ is a right Haar system on $ \Gamma$. For any $\gamma \in \Gamma_{x}^{y}$ and for $f \in H_{x}$ define a unitary transformation $\mathcal{U} (\gamma): H_{x} \rightarrow H_{y}$ by $$( \mathcal{U} (\gamma) f ) (\chi) = f( \chi \gamma ), \quad \chi \in \Gamma_{y}.$$
Such a representation $(\mathcal{U}, \{H_x\}_{x \in X})$ is called \textit{the right regular representation} of the groupoid $\Gamma$.
\end{ex}

For more examples of unitary representations of groupoids, the Reader is referred to \cite{ibo1,pys2}.

\section{Reproducing kernels and unitary representations}
\label{sec:Results}
\subsection{Reproducing kernel associated to a unitary representation of a group}
\label{subsec:Results Groups}

Let $G$ be a group and $(\mathcal{U}, H)$ be its unitary representation on a Hilbert space $H$ equipped with an inner product $\langle \cdot, \cdot \rangle$. Additionally, let $v \in H$ be any fixed vector. Formula (\ref{jadrosplotowe}) in Example \ref{Ex1} suggests considering the kernel $K: G \times G \rightarrow \mathbb{C}$ defined as
\begin{align}
\label{jadro}
K(h,g) := \langle \mathcal{U}(g)v, \mathcal{U}(h)v \rangle.
\end{align}
Notice that thus defined $K$ is a special case of the construction presented in Theorem \ref{tw2} with the map $F: G \rightarrow H$ given by  $F(g) := \mathcal{U}(g)v$, $g \in G$. This means, in particular, that the RKHS $H(K)$ provided by the Moore--Aronszajn theorem is a space of functions of the form $g \mapsto \langle w, \mathcal{U}(g)v \rangle$, where $w \in H$. Notice that, if $\mathcal{U}$ is a weakly continuous unitary representation of a topological group $G$, then $H(K) \subset C(G)$. Moreover, if $\{w_j\}_{j \in I} \subset H$ is a Parseval frame for $H$, then
\begin{align*}
K(h,g) = \langle \mathcal{U}(g)v, \mathcal{U}(h)v \rangle = \sum_{j \in I} \langle \mathcal{U}(g)v, w_j \rangle \langle w_j, \mathcal{U}(h)v \rangle = \sum_{j \in I} \varphi_j(h) \overline{\varphi_j(g)},
\end{align*}
where $\varphi_j := Tw_j \in H(K)$ (cf. the third bullet in Theorem \ref{tw2}).  On the strength of Theorem \ref{tw3}, $\{\varphi_j\}_{j \in I}$ is a Parseval frame for $H(K)$.

By the unitarity of the representation, the kernel $K$ satisfies
\begin{align}
\label{wlasnosc}
K(h,g) = K(g^{-1}h,e) \textup{ for any } g,h \in G,
\end{align}
where $e$ is the unit element of $G$.

Conversely, suppose we are given a positive definite kernel $K: G \times G \rightarrow \mathbb{C}$ on a group $G$ satisfying (\ref{wlasnosc}). We can employ the Moore--Aronszajn theorem to define its representation $(\mathcal{U},H(K))$. Concretely, for any $g \in G$ define $\mathcal{U}(g)\sum_{i=1}^n \lambda_i K_{h_i} := \sum_{i=1}^n \lambda_i K_{gh_i}$ for any element of $H_0(K) := \textup{span} \{K_h\}_{h \in G}$. Since there is no guarantee that the system $\{K_h\}_{h \in G}$ is linearly independent, we must check that such a $\mathcal{U}(g)$ is well defined. To this end, it suffices to prove that if $\sum_{i=1}^n \lambda_i K_{h_i} \equiv 0$ on $G$, then also $\sum_{i=1}^n \lambda_i K_{gh_i} \equiv 0$. But thanks to (\ref{wlasnosc}) we have that, for any $h \in G$,
\begin{align*}
\sum_{i=1}^n \lambda_i K_{gh_i}(h) = \sum_{i=1}^n \lambda_i K_{h_i}(g^{-1}h) = 0
\end{align*}
by assumption. Since $H_0(K)$ is dense in $H(K)$, thus defined $\mathcal{U}(g)$ can be uniquely extended to $H(K)$. Moreover, also by (\ref{wlasnosc}) and by the density argument, one obtains the unitarity of $\mathcal{U}(g)$ by verifying that for any $h,h' \in G$
\begin{align*}
\langle \mathcal{U}(g)K_h, \mathcal{U}(g)K_{h'} \rangle_{H(K)} & = \langle K_{gh}, K_{gh'} \rangle_{H(K)} = K(gh',gh) = K((gh)^{-1}gh',e)
\\
& = K(h^{-1}g^{-1}gh',e) = K(h^{-1}h',e) = K(h,h') = \langle K_{h'}, K_{h} \rangle_{H(K)}.
\end{align*}

Observe, finally, that one can retrieve the kernel $K$ from the above `Moore--Aronszajn representation' through formula (\ref{jadro}). Indeed, one simply has to take $v := K_e$.

\subsection{Reproducing kernel associated to a unitary representation of a groupoid}
\label{subsec:Results Groupoids}

Let us now generalize the above relationship between unitary representations of groups and reproducing kernels onto the groupoid setting. Let thus $\Gamma$ be a groupoid over $X$ and $(\mathcal{U}, \mathbb{H} = \{H_x\}_{x \in X})$ be its unitary representation as specified by Definition \ref{DefRepGrupoidu}. Additionally, let $v$ be a fixed \emph{vector field}, by which we shall understand a mapping $X \ni x \mapsto v(x) \in H_x$ (note: vector fields need not belong to $\mathcal{H} := \widetilde{\bigoplus}_{x \in X} H_x$). Define the kernel $K: \Gamma \times \Gamma \rightarrow \mathbb{C}$ via
\begin{align}
\label{jadro2}
K(\chi, \gamma) := \left\{ \begin{array}{cc} \left\langle \mathcal{U}(\gamma)v(s(\gamma)), \mathcal{U}(\chi)v(s(\chi)) \right\rangle_{r(\gamma)} & \textup{for } r(\gamma) = r(\chi),
\\
0 & \textup{for } r(\gamma) \neq r(\chi).
\end{array} \right.
\end{align}
Observe that for each $x \in X$ the restriction $K^x := K|_{\Gamma^x \times \Gamma^x}$ is a positive definite kernel --- it constitutes a special case of the construction presented in Theorem \ref{tw2} with $F^x: \Gamma^x \rightarrow H_x$ given by $F^x(\gamma) := \mathcal{U}(\gamma)v(s(\gamma))$ for every $\gamma \in \Gamma^x$. Invoking the Moore--Aronszajn theorem (Theorem \ref{MA}), we obtain an RKHS given by $H(K^x) := \widetilde{\textup{span}}\left\{ K^x_\gamma := K^x(\cdot,\gamma) \right\}_{\gamma \in \Gamma^x}$, whose reproducing kernel is $K^x$. By the third bullet of Theorem \ref{tw2}, every element of $H(K^x)$ is a function on $\Gamma^x$ of the form $\gamma \mapsto \langle w_x, \mathcal{U}(\gamma)v(s(\gamma)) \rangle$, where $w_x \in H_x$. Finally, by the fourth bullet of Theorem \ref{tw2}, for any $x \in X$ the Hilbert space $H(K^x)$ can be isometrically embedded into $H_x$, its image being the closed subspace $V^x := \overline{\textup{span}} \{\mathcal{U}(\gamma)v(s(\gamma))\}_{\gamma \in \Gamma^x}$.
\\

Also the kernel $K$ itself is a realization of the general construction described in Theorem \ref{tw2}. To see this, consider $F: \Gamma \rightarrow \mathcal{H}$ given by $F(\gamma) := \mathcal{U}(\gamma)v(s(\gamma))$ and observe that $\langle F(\gamma), F(\chi) \rangle_{\mathcal{H}} := \sum_{x \in X} \langle F(\gamma), F(\chi) \rangle_x$ indeed equals $K(\chi,\gamma)$ for all $\gamma, \chi \in \Gamma$, because $H_x \bot H_y$ as subspaces of $\mathcal{H}$ for $x \neq y$. Therefore, $K$ is positive definite and the Moore--Aronszajn theorem yields an RKHS defined as $H(K) := \widetilde{\textup{span}}\left\{ K_\gamma := K(\cdot,\gamma) \right\}_{\gamma \in \Gamma}$.

Notice that the two above constructions are of RKHS compatible in the sense that
\begin{align}
\label{MA2odslony}
H(K) = \widetilde{\bigoplus}_{x \in X} H(K^x).
\end{align}
Indeed, observe that both spaces have the same dense subspaces, namely (cf. the sketch of the proof of Theorem \ref{MA} above)
\begin{align*}
H_0(K) := \textup{span}\left\{ K_\gamma \right\}_{\gamma \in \Gamma} = \bigoplus_{x \in X} \textup{span}\left\{ K_\gamma \right\}_{\gamma \in \Gamma^x} = \bigoplus_{x \in X} \textup{span}\left\{ K^x_\gamma \right\}_{\gamma \in \Gamma^x} =: \bigoplus_{x \in X} H_0(K^x),
\end{align*}
where the maps $K_\gamma: \Gamma \rightarrow \mathbb{C}$ and $K^x_\gamma: \Gamma^x \rightarrow \mathbb{C}$ have been identified (the former being the extension by zero of the latter). Since both above spaces are equipped with the same inner product (given by (\ref{iloczynskalarny})), they yield the same Hilbert spaces after completion.

Similarly as before, by the third bullet of Theorem \ref{tw2}, every element of $H(K)$ is a function on $\Gamma$ of the form $\gamma \mapsto \langle w, \mathcal{U}(\gamma)v(s(\gamma)) \rangle$, where $w \in \mathcal{H}$. Notice that such functions must vanish on all but countably many fibers $\Gamma^x$. Moreover, if $\{w_j\}_{j \in I} \subset \mathcal{H}$ is a Parseval frame for $\mathcal{H}$, then
\begin{align*}
K(\chi,\gamma) & = \langle \mathcal{U}(\gamma)v(s(\gamma)), \mathcal{U}(\chi)v(s(\chi)) \rangle_{\mathcal{H}} = \sum_{j \in I} \langle \mathcal{U}(\gamma)v(s(\gamma)), w_j \rangle_{\mathcal{H}} \langle w_j, \mathcal{U}(\chi)v(s(\chi)) \rangle_{\mathcal{H}}
\\
& = \sum_{j \in I} \varphi_j(\chi) \overline{\varphi_j(\gamma)},
\end{align*}
where $\varphi_j := \langle w_j, \mathcal{U}(\cdot)v(s(\cdot)) \rangle_{\mathcal{H}}$. Similarly as in the group case, by Theorem \ref{tw3} we obtain that $\{\varphi_j\}_{j \in I}$ is a Parseval frame for $H(K)$.

Finally, by the fourth bullet of Theorem \ref{tw2}, the Hilbert space $H(K)$ can be isometrically embedded into $\mathcal{H}$, its image being the closed subspace $V := \overline{\textup{span}} \{\mathcal{U}(\gamma)v(s(\gamma))\}_{\gamma \in \Gamma} = \overline{\bigoplus}_{x \in X} V^x$, where the last equality can be proven completely analogously to (\ref{MA2odslony}).

Additionally, the unitarity of the representation implies that for $\gamma, \chi \in \Gamma$ such that $r(\gamma) = r(\chi)$
\begin{align}
\label{wlasnosc2}
K(\chi,\gamma) = K(\gamma^{-1}\chi, \varepsilon(r(\gamma^{-1}\chi))),
\end{align}
which is nothing but a straightforward generalization of (\ref{wlasnosc}). Indeed, one has
\begin{align*}
K(\chi,\gamma) & = \left\langle \mathcal{U}(\gamma)v(s(\gamma)), \mathcal{U}(\chi)v(s(\chi)) \right\rangle_{r(\gamma)} = \left\langle v(s(\gamma)), \mathcal{U}(\gamma^{-1}\chi) v(s(\chi)) \right\rangle_{s(\gamma)}
\\
& = K(\gamma^{-1}\chi, \varepsilon(s(\gamma))) = K(\gamma^{-1}\chi, \varepsilon(r(\gamma^{-1}\chi))).
\end{align*}

Let us now consider the converse problem. That is, given a groupoid $\Gamma$ over $X$ and a positive definite kernel $K: \Gamma \times \Gamma \rightarrow \mathbb{C}$ such that $K(\gamma, \chi) = 0$ if $r(\gamma) \neq r(\chi)$ and (\ref{wlasnosc2}) holds, we construct the `Moore--Aronszajn representation' $(\mathcal{U}, \{H(K^x)\}_{x \in X})$ of $\Gamma$. To this end, define each $\mathcal{U}(\gamma): H(K^{s(\gamma)}) \rightarrow H(K^{r(\gamma)})$ first on the dense subspace $H_0(K^{s(\gamma)}) := \textup{span}\{K_\chi\}_{\chi \in \Gamma^{s(\gamma)}}$ by
\begin{align*}
\mathcal{U}(\gamma)\sum_{i = 1}^n \lambda_i K_{\chi_i} := \sum_{i = 1}^n \lambda_i K_{\gamma \chi_i},
\end{align*}
where, similarly as for groups, one can easily check that this definition is sound (analogously as in the group case, it is here where property (\ref{wlasnosc2}) steps in). Observe that $r(\chi_i) = s(\gamma)$ for all $i=1,\ldots,n$, so the products $\gamma \chi_i$ are all well defined. It is also straightforward to prove (again, thanks to (\ref{wlasnosc2})) that thus defined $\mathcal{U}(\gamma)$ preserves inner products. Extending it onto the entire $H(K^{s(\gamma)})$, by the arbitrariness of $\gamma$ we obtain the desired unitary representation of $\Gamma$.

Finally, notice that the kernel $K$ can be retrieved from the `Moore--Aronszajn representation' through formula (\ref{jadro2}), where one has to take the vector field $v(x) := K_{\varepsilon(x)}$, $x \in X$.

Before moving to examples, let us remark that formula (\ref{jadro2}) `works well' with the basic algebraical operations on the groupoid representations.
\begin{uw}
Let $\Gamma$ be a groupoid over $X$ and let $(\mathcal{U}^{(j)}, \mathbb{H}^{(j)} := \{H^{(j)}_{x}\}_{x \in X})$, $j=1,2,\ldots,l$, be its unitary representations. Fixing $l$ vector fields $v_1,v_2,\ldots,v_l$ on $X$ such that $v_j(x)\in  H^{(j)}_{x}$ for $x\in X$ and $j=1,2,\dots,l$ we can construct positive definite kernels $K_1,K_2,\dots,K_l$ on $\Gamma$, respectively, using formula (\ref{jadro2}). Consider now the direct sum $(\bigoplus_{j=1}^l\mathcal{U}^{(j)}, \bigoplus_{j=1}^l\mathbb{H}^{(j)} := \{\bigoplus_{j=1}^l H^{(j)}_{x}\}_{x \in X})$ of the above representations, and, taking the vector field $v^\oplus := \bigoplus_{j=1}^l v_j$, define a kernel $K^\oplus$ on $\Gamma$ via (\ref{jadro2}). Similarly, consider the tensor product $(\bigotimes_{j=1}^l\mathcal{U}^{(j)}, \bigotimes_{j=1}^l\mathbb{H}^{(j)} = \{\bigotimes_{j=1}^l H^{(j)}_{x}\}_{x \in X})$ of the representations and, taking this time the vector field $v^\otimes := \bigotimes_{j=1}^l v_j$, define another kernel $K^\otimes$ on $\Gamma$, again employing formula (\ref{jadro2}). It follows from \cite[p. 16,17]{alp} that
\begin{align*}
K^{\oplus}=K_1 + K_2 + \ldots + K_l \qquad \textup{and} \qquad K^\otimes=K_1 \cdot K_2 \cdot \ldots \cdot K_l.
\end{align*}
\end{uw}

\begin{uw}
Suppose now we have a family of groupoids $\{(\Gamma_i,X_i, s_i,r_i, \varepsilon_i, \cdot_i, {}^{-1_i})\}$ indexed by $i \in I$. Let also $(\mathcal{U}^{(i)}, \mathbb{H}^{(i)} := \{H^{(i)}_{x}\}_{x \in X_i})$ be a unitary representation of the groupoid $\Gamma_i$ for every $i \in I$. The disjoint union $\Gamma:=\bigsqcup_{i\in I}\Gamma_i$ has a natural groupoid structure with $X:=\bigsqcup_{i\in I}X_i$ as a base space, $\Gamma^{(2)}:=\bigsqcup_{i\in I}\Gamma_i^{(2)}$, $\gamma_1\cdot\gamma_2:=\gamma_1\cdot_i\gamma_2$ for $\gamma_1,\gamma_2\in\Gamma_i^{(2)}$, $\varepsilon(x):=\varepsilon_i(x)$ for $x\in X_i$ and, moreover, $s(\gamma):=s_i(\gamma)$, $r(\gamma):=r_i(\gamma)$, $\gamma^{-1}:=\gamma^{{-1}_i}$ for $\gamma\in\Gamma_i$. 

Consider now the family of Hilbert spaces $\mathbb{H} := \{H_x\}_{x \in X}$, where we put $H_x:=H^{(i)}_x$ for $x \in X_i$ and define a mapping $\mathcal{U}$ assigning to each $\gamma \in \Gamma$ a unitary transformation $\mathcal{U}(\gamma): H_{s(\gamma)} \rightarrow H_{r(\gamma)}$ by $\mathcal{U}(\gamma):=\mathcal{U}^{(i)}(\gamma)$ for $\gamma\in\Gamma_i$. Then $(\mathcal{U},\mathbb{H})$ is a unitary representation of $\Gamma$.

For any $i\in I$ let us fix a vector field $X_i \ni x \to v_i(x) \in H^{(i)}_x$ and use formula (\ref{jadro2}) to obtain a positive definite kernel $K_i$ from the unitary representation $(\mathcal{U}^{(i)}, \mathbb{H}^{(i)})$ of $\Gamma_i$. Additionally, introduce a vector field $v$ on $X$ by setting $v(x):=v_i(x)$ if $x\in X_i$ and use it together with the representation $(\mathcal{U},\mathbb{H})$ to obtain, again via (\ref{jadro2}), another positive definite kernel $K$ on $\Gamma$. It is not difficult to observe that
\begin{align}
K(\chi,\gamma) = \left\{ \begin{array}{cl} K_i(\chi,\gamma) & \textup{for } \chi,\gamma \in \Gamma_i,
\\
0 & \textup{for } \chi \in \Gamma_i, \gamma \in \Gamma_{i'} \textup{ with } i \neq i'.
\end{array} \right.
\end{align}
What is more, reasoning analogously as when proving formula (\ref{MA2odslony}), one can show that the RKHS $H(K)$ obtained from $K$ by means of the Moore--Aronszajn theorem satisfies $H(K)=\widetilde{\bigoplus}_{i \in I} H(K_i)$.  
\end{uw}

\begin{ex}
\label{ex:trivial}
Fix a Hilbert space $H$ endowed with the inner product $\langle \cdot, \cdot \rangle$ and let $(\mathcal{U}, \{H_x := \{x\} \times H\}_{x \in X})$ be the trivial representation of the groupoid $\Gamma$. For any fixed vector field $v$, which here can be regarded as an element of $H^X$, we have $\mathcal{U}(\gamma)v(s(\gamma))=v(r(\gamma))$ and formula (\ref{jadro2}) yields the kernel
\begin{align*}
K(\chi, \gamma) = \left\{ \begin{array}{cc}  \left\langle v(r(\gamma)), v(r(\chi)) \right\rangle =  \left\| v(r(\gamma)) \right\|^{2}& \textup{for } r(\gamma) = r(\chi),
\\
0 & \textup{for } r(\gamma) \neq r(\chi).
\end{array} \right.
\end{align*}
In other words, for every $x \in X$ the kernel's restriction $K^x$ is a constant map. Every $H(K^x)$ is thus either one-dimensional (if $v(x) \neq 0$) or zero-dimensional (if $v(x) = 0$). Moreover, if $v$ is a nowhere-vanishing vector field, the set $\{\|v(x)\| \mathbf{1}_{\Gamma^x} \}_{x \in X}$ (where $\mathbf{1}_{\Gamma^x}: \Gamma \rightarrow \{0,1\}$ denotes the indicator function of the fiber $\Gamma^x$) constitutes an orthonormal basis of $H(K)$, and hence the latter Hilbert space is isometrically isomorphic to $l^2(X)$.
\end{ex}

\begin{ex}
\label{qubitex}
As a simple nontrivial example, consider the four-element groupoid $\Gamma := \{\varepsilon(+), \varepsilon(-),$ $\alpha, \alpha^{-1} \}$ over a two-element set $X := \{+,-\}$, visualized in Figure \ref{qubitfig}. Such a groupoid (the pair groupoid over a two-element set) is studied e.g. in the context of quantum information \cite{ibo}.

\begin{figure}[h!]
\begin{displaymath}
\xymatrix{
\bullet_{-} \ar@(ul,dl)[]_{\varepsilon(-)} \ar@/^1pc/@{<-}[rr]^{\alpha}
\ar@/_1pc/@{->}[rr]_{\alpha^{-1}}
&& \bullet_{+} \ar@(ur,dr)[]^{\varepsilon(+)}
}
\end{displaymath}
\caption{Structure of the groupoid $\Gamma$ in Example \ref{qubitex}.}
\label{qubitfig}
\end{figure}
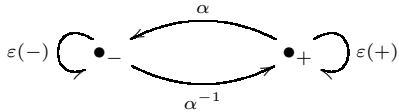

To further simplify the example, let us consider a one-dimensional representation of $\Gamma$. Concretely, let us take $H_+ := \{+\} \times \mathbb{C}$, $H_- := \{-\} \times \mathbb{C}$ and let the unitary representation $(\mathcal{U},\{H_+,H_-\})$ be given by
\begin{align*}
\mathcal{U}(\alpha): H_+ \rightarrow H_-, \quad \mathcal{U}(\alpha)(+,z) := (-,\lambda z)
\end{align*}
and hence $\mathcal{U}(\alpha^{-1}): H_- \rightarrow H_+, \ \mathcal{U}(\alpha^{-1})(-,z) := (+,\bar{\lambda} z)$, where $\lambda$ is a fixed complex number of modulus $1$. Choosing a generic vector field $v(+) := (+,v_+)$, $v(-) := (-,v_-)$, where $v_\pm \in \mathbb{C}$, formula (\ref{jadro2}) yields a kernel whose respective values are presented in Table \ref{tab1}.

\setlength\dashlinedash{0.6pt}
\setlength\dashlinegap{1.5pt}
\setlength\arrayrulewidth{0.3pt}

\begin{table}[h!]
\begin{center}
\begin{tabular}{|c|c c : c c|} \hline
 $K(\cdot,\cdot)$& $\varepsilon(+) $ &	$\alpha^{-1}$ &	$\alpha $	& $\varepsilon(-) $ \\ \hline
 $\varepsilon(+)$ & $ \left|v_{+}\right|^{2} $ &	$\bar{\lambda} v_{-} \bar{v}_{+}$ & 0 & 0 \\
 $\alpha^{-1}$ & $ \lambda \bar{v}_{-}v_{+}$ & $\left| v_{-}\right|^{2} $ & 0 &  0 \\ \hdashline
 $\alpha$ &  0 & 0 & $ \left|v_{+}\right|^{2}$ & $\bar{\lambda} v_{-}\bar{v}_{+}$ \\
 $\varepsilon(-)$ &  0 & 0  & $\lambda \bar{v}_{-}v_{+}$ & $\left| v_{-} \right|^{2}$ \\ \hline
\end{tabular}
\caption{The kernel obtained from the representation studied in Example \ref{qubitex}.}\label{tab1}
\end{center}
\end{table}
Notice that the columns of the above table contain the values of the functions $K_{\varepsilon(+)}$, $K_{\alpha^{-1}}$, $ K_\alpha$, $K_{\varepsilon(-)}$, respectively. These four functions span the space $H(K)$, but they are not linearly independent. In fact, one can write that $H(K) = \textup{span}\{\varphi_+, \varphi_-\}$, where the functions $\varphi_\pm: \Gamma \rightarrow \mathbb{C}$ are defined as
\begin{align*}
& \varphi_+(\varepsilon(+)) := \bar{v}_+, && \varphi_+(\alpha^{-1}) := \lambda \bar{v}_-, && \varphi_+(\alpha) := 0, && \varphi_+(\varepsilon(-)) := 0,
\\
& \varphi_-(\varepsilon(+)) := 0, && \varphi_-(\alpha^{-1}) := 0, && \varphi_-(\alpha) := \bar{\lambda} \bar{v}_+, && \varphi_-(\varepsilon(-)) := \bar{v}_-.
\end{align*}
Unless $v_+ = v_- = 0$, the functions $\varphi_\pm$ can be shown to be orthonormal:
\begin{align*}
\langle \varphi_+, \varphi_+ \rangle_{H(K)} = \langle \varphi_-, \varphi_- \rangle_{H(K)} = 1 \quad \textup{and} \quad \langle \varphi_+, \varphi_- \rangle_{H(K)} = \langle \varphi_-, \varphi_+ \rangle_{H(K)} = 0.
\end{align*}
Therefore, we have $H(K) \cong \mathbb{C}^2$ as Hilbert spaces, and so in this case $H(K)$ is \emph{isomorphic} to $\mathcal{H} := H_+ \oplus H_-$ and not just isometrically embedded in the latter (cf. Theorem \ref{tw2}). In addition, observe that the (restricted) functions $\varphi_+|_{\Gamma^+},\varphi_-|_{\Gamma^-}$ span the RKHS's $H(K^+)$, $H(K^-)$, respectively, built from the restricted kernels. All in all, we have that
\begin{align*}
H(K) = \textup{span}\{\varphi_+, \varphi_-\} = \textup{span}\{\varphi_+\} \oplus \textup{span}\{\varphi_-\} = H(K^+) \oplus H(K^-),
\end{align*}
where again we have identified the restricted functions with its extensions by zero, in full agreement with formula (\ref{MA2odslony}).

Finally, the set $\{\varphi_+,\varphi_-\}$, being an orthonormal basis of $H(K)$, is a Parseval frame for $H(K)$, and hence by Theorem \ref{tw3} we must have, for every $\chi,\gamma \in \Gamma$,
\begin{align*}
K(\chi,\gamma) = \varphi_+(\chi)\overline{\varphi_+(\gamma)} + \varphi_-(\chi)\overline{\varphi_-(\gamma)},
\end{align*}
As one can check directly, the above equality indeed holds in the considered case.
\end{ex}

\begin{ex}
Let $\Gamma$ be a locally compact groupoid endowed with a left Haar system $\{\lambda^x\}_{x \in X}$, and let $(\mathcal{U}, \{H_x := L^2(\Gamma^x, \lambda^x)\}_{x \in X})$ be its left regular representation, i.e. $(\mathcal{U}(\gamma)f)(\xi) := f(\gamma^{-1}\xi)$ for any $f \in H_{r(\gamma)}$.

For any fixed vector field $X \ni x \mapsto v(x) \in H_x$ the reproducing kernel reads, in the case when $r(\chi) = r(\gamma)$,
\begin{align*}
K(\chi, \gamma) & = \int_{\Gamma^{r(\gamma)}} v(s(\gamma))(\gamma^{-1}\xi) \overline{ v(s(\chi))(\chi^{-1}\xi)} d \lambda^{r(\gamma)} (\xi)
\\
& = \int_{\Gamma^{s(\gamma)}} v(s(\gamma))(\xi) \overline{ v(s(\chi))(\chi^{-1} \gamma \xi)} d \lambda^{s(\gamma)} (\xi)
\\
& = \int_{\Gamma^{s(\gamma)}} v(s(\gamma))(\xi) \widetilde{ v(s(\chi))}(\xi^{-1}\gamma^{-1} \chi) d \lambda^{s(\gamma)} (\xi).
\end{align*}

We note that this is an analogue of the kernel defined by the convolution on a group with respect to a left Haar measure (cf. Example \ref{Ex1}). Although there exists a standard definition of a convolution on a groupoid (see, e.g., \cite[p. 38]{pat}), the above expression does not entirely fit into it, because the convoluted functions $v(s(\gamma))$ and $\widetilde{v(s(\chi))}$ are \emph{not} defined over entire $\Gamma$. In fact, $v(s(\gamma)) \in L^{2}(\Gamma^{s(\gamma)}, \lambda^{s(\gamma)}) $, whereas $\widetilde{ v(s(\chi))} \in L^{2}(\Gamma_{s(\chi)}, \textup{inv}_{\ast} \lambda^{s(\chi)})$ (where $\textup{inv}$ denotes here the inversion map, $\textup{inv}(\gamma) := \gamma^{-1}$) and as such their convolution is well defined only on $\Gamma^{s(\gamma)}_{s(\chi)}$.

Of course, when $r(\gamma) \neq r(\chi)$ the kernel is by definition $K(\chi, \gamma) =0$.
\end{ex}

\begin{ex}
Let $\Gamma$ be a locally compact groupoid endowed this time with a right Haar system $\{\lambda_x\}_{x \in X}$, and let $(\mathcal{U}, \{H_x := L^2(\Gamma_x, \lambda_x)\}_{x \in X})$ be its right regular representation, i.e. $(\mathcal{U}(\gamma)f)(\xi) := f(\xi\gamma)$ for any $f \in H_{r(\gamma)}$.

For any fixed vector field $X \ni x \mapsto v(x) \in H_x$ the reproducing kernel reads, in the case when $r(\chi) = r(\gamma)$,
\begin{align*}
  K(\chi, \gamma) & =\int_{\Gamma_{r(\gamma)}} v(s(\gamma))(\xi \gamma) \overline{ v(s(\chi))(\xi \chi)} d \lambda_{r(\gamma)} (\xi)
\\
& =\int_{\Gamma_{s(\gamma)}} v(s(\gamma))(\xi) \overline{ v(s(\chi))(\xi \gamma^{-1} \chi )} d \lambda_{s(\gamma)} (\xi)
\\
& = \int_{\Gamma_{s(\gamma)}} v(s(\gamma))(\xi) \widetilde{ v(s(\chi))}(\chi^{-1}\gamma \xi^{-1}) d \lambda_{s(\gamma)} (\xi).
\end{align*}
This also can be seen as something analogous to the convolution (only this time, related to the right Haar measure in the group setting).

Of course, when $r(\gamma) \neq r(\chi)$ the kernel $K(\chi, \gamma)$ is defined to vanish, just as in the previous examples.
\end{ex}

\begin{ex}
This example is inspired by \cite{BP-W}. Let $X$ be the set of all pairs $(\Omega,z)$, where $\Omega$ is a domain (nonempty, open and connected set) in ${\mathbb C}^n$ and $z\in\Omega$. Let also $\Gamma$ be the set of all pairs $(\Phi,z)$, where $\Phi: \Omega_1\to\Omega_2$ is a biholomorphism between open domains in ${\mathbb C}^n$ and $z\in\Omega_1$. We define the groupoid $(\Gamma,X,s,r,\varepsilon, \cdot, {}^{-1})$ as follows.

If $(\Phi,z)\in\Gamma$, where $\Phi: \Omega_1\to\Omega_2$, then $s(\Phi,z):=(\Omega_1,z)$ and $r(\Phi,z):=(\Omega_2,\Phi(z))$. The embedding map is given by $\varepsilon(\Omega,z):=(\textup{id}_{\Omega},z)$, whereas the inversion map reads $(\Phi,z)^{-1}:=(\Phi^{-1},\Phi(z))$. Finally, the multiplication $\cdot$ of pairs $(\Phi: \Omega_1\to\Omega_2,z)$ and $(\Psi: \Omega_3\to\Omega_4,\zeta)$ is defined if $\Omega_2=\Omega_3$ and $\zeta=\Phi(z)$, in which case $(\Psi,\zeta)(\Phi,z):=(\Psi \circ \Phi,z)$, where $\circ$ is an ordinary composition of mappings.

In order to introduce a kernel on $\Gamma$, define a map $k:\Gamma \to \mathbb{C}$ via
\begin{align*}
k(\Phi,z):=\frac{J\Phi(z)}{|J\Phi(z)|},\quad (\Phi,z)\in\Gamma,
\end{align*}
where $J\Phi(z)$ denotes the complex Jacobian determinant of $\Phi$ at the point $z$. Notice that the map $k$ is multiplicative, i.e., $k(\gamma \chi) = k(\gamma) k(\chi)$ for any $(\gamma, \chi) \in \Gamma^{(2)}$. Indeed, using elementary properties of the Jacobian, we can write
\begin{align}
\label{kmultiplicative}
k((\Psi,\zeta)(\Phi,z)) = k(\Psi \circ \Phi,z) = \frac{J(\Psi \circ \Phi)(z)}{|J(\Psi \circ \Phi)(z)|} = \frac{J(\Psi)(\zeta)}{|J(\Psi)(\zeta)|} \cdot \frac{J(\Phi)(z)}{|J(\Phi)(z)|} = k(\Psi,\zeta)k(\Phi,z),
\end{align}
where $\zeta = \Phi(z)$ by the multiplicability of $(\Psi,\zeta)$, $(\Phi,z)$. What is more, for any $(\Omega,z) \in X$ one trivially has that $k(\varepsilon(\Omega,z)) = k(\textup{id}_\Omega,z) = 1$ and hence, moreover,
\begin{align}
\label{kmultiplicative2}
k(\gamma^{-1}) = k(\gamma)^{-1} k(\gamma) k(\gamma^{-1}) = k(\gamma)^{-1} k(\gamma \gamma^{-1}) = k(\gamma)^{-1} k(\varepsilon(r(\gamma))) = k(\gamma)^{-1} = \overline{k(\gamma)}
\end{align}
for every $\gamma \in \Gamma$.
 
Let now $K:\Gamma\times\Gamma\to{\mathbb C}$ be a kernel given by the formula
\begin{align}
\label{jadroJF1}
K(\chi,\gamma) := \left\{ \begin{array}{cc} k(\gamma)\overline{k(\chi)} & \textup{for } r(\gamma) = r(\chi),
\\
0 & \textup{for } r(\gamma) \neq r(\chi),
\end{array} \right.
\end{align}
that is
\begin{align*}
K((\Psi,\zeta),(\Phi,z)) := \left\{ \begin{array}{cc} \frac{J\Phi(z)}{|J\Phi(z)|}\frac{\overline{J\Psi(\zeta)}}{|J\Psi(\zeta)|} & \textup{for } r(\Phi,z) = r(\Psi,\zeta),
\\
0 & \textup{for } r(\Phi,z)\neq r(\Psi,\zeta).
\end{array} \right.
\end{align*}
One can easily show that the above kernel is positive definite. In fact, it is a realization of the general construction described in Theorem \ref{tw2} with the function $F: \Gamma \to l^2(X)$ defined as $F(\Phi,z) := k(\Phi,z)\delta_{r(\Phi,z)}$, where $\delta_x: X \to \mathbb{C}$ denotes the Kronecker delta concentrated at $x \in X$.

Moreover, the above kernel satisfies (\ref{wlasnosc2}). Indeed, by (\ref{kmultiplicative},\ref{kmultiplicative2}) one has that, whenever $r(\chi) = r(\gamma)$,

\begin{align*}
K(\chi,\gamma) = k(\gamma)\overline{k(\chi)} = \overline{k(\gamma^{-1})k(\chi)} = \overline{k(\gamma^{-1}\chi)} = k(\varepsilon(\gamma^{-1}\chi))\overline{k(\gamma^{-1}\chi)} = K(\gamma^{-1}\chi,\varepsilon(r(\gamma^{-1}\chi))).
\end{align*}

On the strength the discussion following formula (\ref{wlasnosc2}), the just proven properties of the kernel $K$ mean that the latter can be used to construct a unitary `Moore--Aronszajn representation' $(\mathcal{U},\{H(K^x)\}_{x\in X})$ of $\Gamma$, with each $\mathcal{U}(\gamma): H(K^{s(\gamma)}) \rightarrow H(K^{r(\gamma)})$ satisfying
\begin{align}
\label{MArep}
\mathcal{U}(\gamma) K_{\chi} = K_{\gamma \chi}, \quad \chi \in \Gamma^{s(\gamma)}.
\end{align}
In order to better understand this representation, notice first that for any $\chi \in \Gamma$
\begin{align*}
K_\chi = k(\chi) \bar{k} \cdot \mathbf{1}_{\Gamma^{r(\chi)}},
\end{align*}
where $\mathbf{1}_{\Gamma^{r(\chi)}}$ denotes the indicator function of the fiber $\Gamma^{r(\chi)}$ (cf. Example \ref{ex:trivial}). This in particular means that for every $x \in X$ the space $H(K^x)$ is spanned by the function $\bar{k} \cdot \mathbf{1}_{\Gamma^{x}}$, which can be easily shown to be of norm $1$. Observe now that, for any chosen $\chi \in \Gamma^{s(\gamma)}$
\begin{align*}
\mathcal{U}(\gamma)(\bar{k} \cdot \mathbf{1}_{\Gamma^{s(\gamma)}}) = k(\chi)^{-1} \mathcal{U}(\gamma)K_\chi = k(\chi)^{-1} K_{\gamma \chi} = k(\chi)^{-1} k(\gamma \chi) \bar{k} \cdot \mathbf{1}_{\Gamma^{r(\gamma \chi)}} = k(\gamma) \bar{k} \cdot \mathbf{1}_{\Gamma^{r(\gamma)}},
\end{align*}
where we have used (\ref{MArep}) and (\ref{kmultiplicative}). In other words, under the above choice of the orthonormal bases of the one-dimensional spaces $H(K^x)$, the transformation $\mathcal{U}(\gamma)$ can be regarded simply as the multiplication by the complex number $k(\gamma)$.

The above groupoid $\Gamma$ together with the simple kernel given by (\ref{jadroJF1}) is by no means the only one worth investigating in the context of biholomorphisms. The construction can be generalized, e.g., by considering, for some fixed natural $N>1$, the sets $X_N:=\{(\Omega,\underline{z}) \ | \ \underline{z} \in\Omega^N\}$ and $\Gamma_N:=\{(\Phi,\underline{z}) \ | \ \underline{z} \in\Omega^N\}$, where $\underline{z}:=(z_1,\ldots,z_N)$ and the symbols $\Omega$ and $\Phi$ have the same meaning as before. For any biholomorphism $\Phi\colon\Omega_1\to\Omega_2$ we put 
\begin{align*}
& s(\Phi,\underline{z}):=(\Omega_1,\underline{z}), \qquad r(\Phi,\underline{z}):=(\Omega_2,\underline{\Phi(z)}), \qquad (\Phi,\underline{z})^{-1}:=(\Phi^{-1},\underline{\Phi(z)}),
\end{align*}
where $\underline{\Phi(z)}:=(\Phi(z_1),\ldots,\Phi(z_N))$, and we define the multiplication
\begin{align*}
(\Psi,\underline{\zeta})(\Phi,\underline{z}):=(\Psi\circ \Phi,\underline{z})
\end{align*}
provided $\Phi(\zeta) = \underline{z}$ and $\Psi: \Omega_2 \rightarrow \Omega_3$ is another biholomorphism. Finally, the embedding map is given by $\varepsilon(\Omega,\underline{z}):=(\textup{id}_{\Omega},\underline{z})$ for any $(\Omega,\underline{z}) \in X_N$. The septuple $(\Gamma_N, X_N, s, r, \varepsilon, \cdot, {}^{-1})$ is a groupoid and we can define $N$ positive definite kernels $K_1,\ldots,K_N:\Gamma_N \times \Gamma_N \to \mathbb{C}$ via
\begin{align}
\label{jadroJFk}
K_j((\Psi,\underline{\zeta}),(\Phi,\underline{z})) := \left\{ \begin{array}{cc} k(\Phi,z_j)\overline{k(\Psi,\zeta_j)} & \textup{for } r(\Phi,\underline{z}) = r(\Psi,\underline{\zeta}),
\\
0 & \textup{for } r(\Phi,\underline{z}) \neq r(\Psi,\underline{\zeta}),
\end{array} \right.
\end{align}
$j=1,\ldots,N$. Since all $K_j$'s satisfy condition (\ref{wlasnosc2}), then for any $\alpha_1,\ldots,\alpha_N>0$ and any natural exponents
$m_1,\ldots,m_N$ the function $K:=\alpha_1K_1^{m_1}+\alpha_2K_2^{m_2}+\ldots+\alpha_N K_N^{m_N}$ is a positive definite kernel\footnote{It is well known that the finite product of positive definite kernels on $A$ is itself a positive definite kernel (see, e.g., \cite[p. 6,17]{alp}). The same, of course, concerns linear combinations with positive coefficients of positive definite kernels defined on the same set $A$.} on $\Gamma_N$ also satisfying condition (\ref{wlasnosc2}). Therefore, $K$ defines a unitary `Moore--Aronszajn representation' of $\Gamma_N$, which no longer offers such a straightforward interpretation as the one presented above. This, however, goes beyond the scope of the current article and will be addressed in the future work.
\end{ex}

\section{Applications and outlook}
\label{sec:Applications}

Let us briefly discuss possible applications of the presented relationship between unitary representations of groupoids and reproducing kernels.

Kernel methods are practically utilized, e.g., in the machine learning field. Currently, their typical implementation is the classification or regression task, where the kernel-based method can be used to process the feature vector (representing the analyzed object) and produce the desired output, ensuring the minimum error, even if the data are difficult to distinguish. The most popular method is the Support Vector Machines Classifier (SVC), used to identify linearly inseparable objects. Their original features, based on which the decision is made, are transformed using the kernel function to the new space, where separation of examples belonging to various categories is easier \cite{dre}. However, one of the requirements for the kernel function $K(x,y)=\left\langle \tau(y),\tau(x)\right\rangle$ is that its input arguments are real numbers. This function can be substituted by $K(h,g) = \langle \mathcal{U}(g)v, \mathcal{U}(h)v \rangle$. As a result, the unitary representation $\mathcal{U}$  on the group (instead of object transformation $\tau$) is used.

Another domain in which kernel-based methods prove their usefulness is optimization. The problem, often encountered in data processing modules (implemented in such fields as electronics or control engineering) is the selection of the optimal kernel regarding the distance between feature vectors in the multidimensional space. According to \cite{pis}, such a measure (on the groupoids) can be used to solve the generalized version of the Traveling Salesman Problem. The overall distance to minimize is given as: 
$L=\sum_{i=1}^{N} d(c_{i},c_{i+1}),$ 
where $c_{i}$ and $c_{i+1}$ are two subsequent nodes from the graph in the optimized cycle. Using the kernel to describe distances between nodes in the new space allows for estimating the distance components as:
\begin{align*}
d(x,y)=\sqrt{K(x,x)-2K(x,y)+K(y,y)},
\end{align*}
where kernel $K$ is a real-valued function. This type of distance is described in \cite[p. 78]{gar}. In our proposed application, selection of the optimal kernel among various candidates (ensuring the minimal or maximal distance between the points) can be done without the actual transformation of the original space to the new one (which is the core of the kernel applicability in the machine learning).

Yet another possible application concerns quantum physics, where both reproducing kernels \cite{odz} and groupoids and their representations \cite{cia} have been used in the description of quantum systems. For instance, the simple representation of the pair groupoid of a two-element set, considered in Example \ref{qubitex} above, can be used in the description of a qubit --- the central notion of the theory of quantum information \cite{ibo}. In the future work we shall investigate the physical meaning and significance of the kernels and RKHS associated to the quantum-mechanically relevant unitary representations of groupoids.

On the mathematical side, let us add that the simple definition of a unitary representation of a groupoid employed in the paper can be generalized onto the setting of measurable fields of Hilbert spaces \cite{dix,pys2,ren}. The natural question whether the above-studied relationship between RKHS and unitary groupoid representations still holds in this more general setting will also be addressed in the future work. This line of research might in the end offer some new tools and insights in the fields of complex analysis, quantum physics and computer science.



\vfill\eject

\end{document}